\newtheorem{Thm}{Theorem}
\newtheorem{Lem}{Lemma}
\newtheorem{prp}{Property}
\newtheorem{Rem}{Remark}
\newtheorem{dfn}{Definition}
\begin{document}

MSC 37H05, 60K37

\begin{center}
{Valery Doobko\footnote{Research Center of the Institute of Cybernetics, Kiev, Ukraine; doobko2017@ukr.net} and Elena Karachanskaya\footnote{Far Eastern State Transport University, Khabarovsk, Russia; elena$_{-}$chal@mail.ru}}
\end{center}

\begin{center}
\textbf {Indicator Random Processes and its Application for Modeling Open Stochastic Systems}
\end{center}

\begin{abstract}
The authors present a method of indicator random processes, applicable to constructing models of jump processes associated with diffusion process. Indicator random processes are processes that take only two values: 1 and 0, in accordance with some probabilistic laws. It is shown that the indicator random process is invariant when reduced to an arbitrary positive degree. Equations with random coefficients used in modeling dynamic systems, when applying the method of indicator random processes, can take into account the possibility of adaptation to external changes, including random ones, in order to preserve indicators important for the existence of the system, which can be continuous or discrete. In the case of indicator random processes, defined as functions of the Poisson process, equations for dynamic processes in a media with abruptly changing properties are constructed and studied. To study the capabilities of the proposed method, dynamic models of the diffusion process in media with delay centers and diffusion processes during transitions by switching from one subspace to another were studied. For these models, equations for characteristic functions are constructed. Using the method of indicator random processes, a characteristic function for the Kac model was constructed. It is shown that in the case of dependence of the indicator random process on the Poisson process, the equation for the characteristic function corresponds to the telegraph equation. This result coincides with the result of Kac.
\end{abstract}

Keywords: Indicator random processes; stochastic dynamical models.

\section{Introduction}
An open system is a system that has external interactions. Therefore, a process modeling in an open system is difficult due to external random disturbances that greatly affect the process.The modeling of diffusion processes with jumps is still a pressing problem, since many applied problems are described by similar models. Sharp fluctuations in option prices have led to the need to study jump diffusion models, for example, the Merton model%  \cite{ref-journal01}, \cite{ref-journal02}
, and Kou's model% \cite{ref-book09}
.

Stochastic processes that can instantly, due to a jump, change their parameters, described by systems of stochastic differential equations with switching, are also used in problems of financial mathematics% \cite{ref-proceeding02}, \cite{ref-journal03}, \cite{ref-journal04}, and others
. Switching jump-diffusion models are also used for problems in control theory, and diffusion in random media \cite{ref-book-hanson} %\cite{ref-journal12}, \cite{ref-journal13}, \cite{ref-journal14}
, \cite{ref-journal18}, \cite{ref-journal17}, and others.

In contrast to the models mentioned above, we will consider the It\^{o} stochastic differential equations with regime switching based on the method, which we propose below. These equations can be used in the simulation of dynamic systems that can adapt to external changes, including random ones. These changes in the structure of the system can be continuous, discrete or mixed. If changes in the coefficients of the equations is jump-like, then one of the ways to model transitions from one state of the system to another is to use methods from the theory of queuing systems. Nevertheless, to describe the dynamics of the implementation of these states, one needs to use a stochastic equation.

The purpose of the article  is to show the possibility of studying random processes described by the stochastic differential It\^{o} equations, the properties of which can jump-like change at random times, using the method of indicator random processes.

In our previous article \cite{ref-proceeding}, we proposed a method and an idea for its use in modeling diffusion with delay centers. In this article, we consider several models of random processes associated with diffusion processes. For the presented models of random processes, characteristic functions have been constructed that determine the probabilistic properties of these processes. The application of the proposed method of indicator random processes for finding solutions to equations for characteristic functions is demonstrated.

\section{Indicator random process}

Indicator functions are often used to identify subsets, points, and properties from a certain set.
An indicator function in the form of a "delta function"\, is used, for example, to make the transition from the original, generally nonlinear dynamic system to an equivalent description in terms of linear partial differential equations% \cite{ref-book11}
.

Let us introduce the concept of an indicator random process. Here and below, all random processes under consideration are defined on a common probability space.

\begin{dfn}\label{df1}
A random process $\chi(t)$ is called an indicator random process if it takes only two values: 1 or 0.
% We call an indicator random process a jump-like random process $\chi(t)$, which can take only two values: 1 or 0.
\end{dfn}

\noindent

To study the properties of the process $\chi(t)$, we use following notations: $Prob(A)$ is the probability of an event A, $E[\chi(t)]$ is the mathematical expectation of a random process $\chi(t)$, $t \geqslant 0$ is a time.

\begin{prp}\label{Proper1}
For any time $t$ the following conditions are satisfied:
%\begin{linenomath}
\begin{equation}\label{1}
(\chi(t))^{\alpha} = \chi(t), \ \ \ (1-\chi(t))^{\alpha} = 1-\chi(t) \ \ \  \forall\,\alpha>0.
\end{equation}
%\end{linenomath}
\end{prp}

\begin{prp}\label{Proper2}
The following equalities are satisfied:
%\begin{linenomath}
\begin{equation}\label{2}
  \begin{gathered}
    E[\chi(t)] = Prob(\chi(t)=1), \\
    Prob(\chi(t)=0) = 1 - E[\chi(t)].
  \end{gathered}
\end{equation}
%\end{linenomath}
\end{prp}

%\begin{Property}\label{Proper3}
%\hl{For any positive degree $\alpha$ it is hold}:
%%\begin{linenomath}
%\begin{equation*}%\label{3}
%(1-\chi(t))^{\alpha} = 1-\chi(t).
%\end{equation*}
%%\end{linenomath}
%\end{Property}
%
%\noindent
%\hl{As an example, we can prove the case where $\alpha\in \mathbf{N}$ using the binomial expansion}.

\begin{dfn}\label{df14}
Two and more random processes $\xi_{1}(t),\xi_{2}(t), ... $, are called incompatible processes if for every $t\geqslant 0$ only one process from this collection is nonzero:
%\begin{linenomath}
\begin{equation}\label{4==}
\begin{array}{c}
  \xi_{k_{1}}(t_{1})\neq 0, \ \ \ \
  \xi_{j}(t_{1})= 0, \ \ for \ any \ j\neq k_{1}; \\
 \xi_{k_{2}}(t_{2})\neq 0, \ \ \ \
  \xi_{j}(t_{2})= 0, \ \ for \ any \ j\neq k_{2}; \\
  \ldots
\end{array}
\end{equation}
%\end{linenomath}
\end{dfn}

\begin{Lem}\label{lem2}
Let $\chi_{j}(t)$, $j=1,2,\ldots,n-1$, be independent indicator random processes. Then the random processes
%\begin{linenomath}
\begin{equation}\label{4}
  \begin{gathered}
    z_{1}(t) = \chi_{1}(t); \ \ \ z_{k}(t) = \chi_{k}(t)\prod\limits_{j=1}^{k-1}(1-\chi_{j}(t)), \ \ \ k=2,3,\ldots,n-1; \\
    z_{n}(t) = \prod\limits_{j=1}^{n-1}(1-\chi_{j}(t)),
  \end{gathered}
\end{equation}
%\end{linenomath}
form a complete group of incompatible processes for every $t \geqslant 0$.
\end{Lem}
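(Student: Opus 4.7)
The plan is to reduce the claim to two verifications: (i) every $z_k(t)$ itself takes values in $\{0,1\}$, so that ``nonzero'' coincides with ``equal to $1$''; and (ii) at every time $t$, exactly one of the $z_k(t)$ equals $1$. Together these yield both the incompatibility condition of Definition~\ref{df14} and the completeness (``complete group'') property.

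For (i), I would simply observe that each factor $\chi_j(t)$ and each factor $1-\chi_j(t)$ lies in $\{0,1\}$ by Definition~\ref{df1}, and Property~\ref{Proper1} guarantees that products of such factors stay in $\{0,1\}$ (in fact one can invoke $(\chi_j)^\alpha=\chi_j$ and $(1-\chi_j)^\alpha=1-\chi_j$ to close the set under multiplication). Hence each $z_k(t)\in\{0,1\}$.

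The core of the argument is (ii), and I plan to prove it via a telescoping identity. Introduce the abbreviation $P_0(t)\equiv 1$ and $P_k(t)=\prod_{j=1}^{k}(1-\chi_j(t))$ for $k=1,\dots,n-1$. Then for $1\le k\le n-1$ one has
\begin{equation*}
z_k(t)=\chi_k(t)\,P_{k-1}(t) = \bigl(1-(1-\chi_k(t))\bigr)P_{k-1}(t)=P_{k-1}(t)-P_k(t),
\end{equation*}
while $z_n(t)=P_{n-1}(t)$. Summing gives a telescoping cancellation:
\begin{equation*}
\sum_{k=1}^{n}z_k(t)=\sum_{k=1}^{n-1}\bigl(P_{k-1}(t)-P_k(t)\bigr)+P_{n-1}(t)=P_0(t)=1.
\end{equation*}
Since each summand is in $\{0,1\}$ by step (i) and the total is $1$, exactly one summand equals $1$ at every $t\ge 0$. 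This is precisely the incompatibility condition \eqref{4==} together with the completeness property, so $\{z_1,\dots,z_n\}$ is a complete group of incompatible processes.

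There is no serious obstacle here; the only nontrivial step is spotting the telescoping structure $z_k=P_{k-1}-P_k$, after which everything follows. Note that independence of the $\chi_j$ is not used in this pointwise algebraic argument; it is stated in the hypothesis presumably because it will be needed in later applications (e.g.\ for computing joint expectations of the $z_k$), not in the present verification.
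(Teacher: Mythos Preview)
Your argument is correct. Both proofs are elementary and purely algebraic, and both correctly observe (as you do explicitly) that independence plays no role here.

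The organization, however, differs from the paper's. The paper establishes incompatibility first, by computing the pairwise product $z_l(t)z_k(t)$ directly and extracting the factor $\chi_k(t)(1-\chi_k(t))=0$; it then shows completeness by rewriting the sum $\sum_k z_k(t)$ as the nested expression $\chi_1+(1-\chi_1)[\chi_2+(1-\chi_2)[\cdots]]$ and arguing it equals $1$. You instead go straight to the sum via the telescoping identity $z_k=P_{k-1}-P_k$, obtain $\sum_k z_k=1$ in one line, and then deduce incompatibility a posteriori from the fact that $\{0,1\}$-valued summands totaling $1$ must contain exactly one $1$. Your route is a bit more economical, since a single computation delivers both conclusions; the paper's route has the small advantage of exhibiting the vanishing of $z_lz_k$ explicitly, which is reused later (e.g.\ in the proof of Theorem~\ref{th1}).

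One minor quibble: your appeal to Property~\ref{Proper1} in step~(i) is not quite on point, since that property concerns powers of a single indicator rather than products of different ones. The closure of $\{0,1\}$ under multiplication is immediate arithmetic and needs no citation.
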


\begin{proof}
1. Each of the processes \eqref{4} can take only two values: 1 and 0. Let us consider the product of random processes $z_{l}(t)z_{k}(t)$ at any time $t$ for any $1 \leqslant k,l<n$. Taking into account the Definition \ref{df1} of an indicator random process and Property \ref{Proper1}, we obtain
%\begin{linenomath}
\begin{equation}\label{5}
  \begin{aligned}
    z_{l}(t)z_{k}(t) & = \chi_{k}(t)\prod\limits_{j=1}^{k-1}(1-\chi_{j}(t))\chi_{l}(t)\prod\limits_{j=1}^{l-1}(1-\chi_{j}(t)) \\
    & = \chi_{k}(t)(1-\chi_{k}(t)) \prod\limits_{j=1}^{k-1}(1-\chi_{j}(t))^{2}\chi_{l}(t)\prod\limits_{i=k+1}^{l-1}(1-\chi_{i}(t))=0.
  \end{aligned}
\end{equation}
%\end{linenomath}
Similarly, for any $1 \leqslant k<n$ the following holds: $z_{k}(t)z_{n}(t)=0$.
Therefore, random processes \eqref{4} are incompatible (see Definition \ref{df14}).

2. Consider the process $Z_{ n }(t)=\sum\limits_{j=1}^{n} z_{n}(t)$:
%\begin{linenomath}
\begin{equation}\label{7}
  \begin{aligned}
    Z_{ n }(t) & = \chi_{1}(t)+ \sum\limits_{k=2}^{n-1} \chi_{k}(t)\prod\limits_{j=1}^{k-1}(1-\chi_{j}(t))+ \prod\limits_{j=1}^{n-1}(1-\chi_{j}(t)) \\
    & = \chi_{1}(t)+(1-\chi_{1}(t))[\chi_{2}(t)+ \chi_{3}(t)(1-\chi_{2}(t)) + \ldots \\
    & ~~~ {} + \chi_{n-1}(t)(1-\chi_{2}(t))\cdots(1-\chi_{n-2}(t))+ (1-\chi_{2}(t))\cdots(1-\chi_{n-1}(t))].
  \end{aligned}
\end{equation}
%\end{linenomath}
At any time $t$, the process \eqref{7} will take the value 1 (due to the Definition \ref{df1} of processes $\chi_{j}(t)$). Thus, at any given time $t$, only one of the processes \eqref{4} will take the value 1.

The results \eqref{5}, \eqref{7} lead to the statement of the lemma.
\end{proof}

\begin{Rem}
Using a given set of incompatible processes that form the complete group, and knowing their probabilities, one can proceed to constructively specifying the implementations of random processes with mixture structure (see Section \ref{appl2}).
\end{Rem}

\section{Characteristic function for a sum of incompatible processes}
Let $g_{1}(t), \ldots, g_{n}(t)$ be random processes. Consider the random process $Y(t)=\sum\limits_{j=1}^{n}y_{j}(t)$, where
%\begin{linenomath}
\begin{equation*}%\label{9}
  \begin{gathered}
    y_{1}(t)=\chi_{1}(t)g_{1}(t); \ \ \ y_{k}(t)=\chi_{k}(t)\prod\limits_{j=1}^{k-1}(1-\chi_{j}(t))g_{k}(t), \ \ \ k=2,3,\ldots,n-1; \\
    y_{n}(t)=\prod\limits_{j=1}^{n-1}(1-\chi_{j}(t))g_{n}(t),
  \end{gathered}
\end{equation*}
%\end{linenomath}
and indicator random processes $\chi_{j}(t)$, $j=1,2,\ldots,n$, are independent of each other and with the random processes $g_{l}(t)$, $l=1,\ldots,n$. Then the random process $Y(t)$ has the form:
%\begin{linenomath}
\begin{equation}\label{10}
   Y(t)=\chi_{1}(t)g_{1}(t)+ \sum\limits_{k=2}^{n-1}\chi_{k}(t)\prod\limits_{j=1}^{k-1}(1-\chi_{j}(t))g_{k}(t)  +\prod\limits_{j=1}^{n-1}(1-\chi_{j}(t))g_{n}(t).
\end{equation}
%\end{linenomath}

In accordance with Lemma \ref{lem2}, at any time $t$ only one of the processes $g_{k}(t), \ k=1,\ldots, n,$ will be accomplished.

Let us construct the characteristic function of the random process \eqref{10}.
\begin{Thm}\label{th1}
Let $\chi_{j}(t)$, $j=1,2,\ldots,n-1$, be independent indicator random processes, and $Prob(\chi_{i}(t)=1)=p_{i}(t)$. Then the characteristic function of the process \eqref{10} has the form:
%\begin{linenomath}
\begin{equation*}%\label{101}
 J(t)=p_{1}(t)E[e^{i\beta g_{1}(t)}]+ \sum\limits_{k=2}^{n-1}E[e^{i\beta g_{k}(t)}]p_{k}(t)\prod\limits_{j=1}^{k-1}(1-p_{j}(t))  +E[e^{i\beta g_{n}(t)}]\prod\limits_{j=1}^{n-1}(1-p_{j}(t)).
\end{equation*}
%\end{linenomath}
\end{Thm}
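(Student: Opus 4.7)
The plan is to compute $J(t) := E[e^{i\beta Y(t)}]$ by leveraging the complete-group structure from Lemma~\ref{lem2}. Writing $z_{1}(t)=\chi_{1}(t)$, $z_{k}(t)=\chi_{k}(t)\prod_{j=1}^{k-1}(1-\chi_{j}(t))$ for $k=2,\ldots,n-1$, and $z_{n}(t)=\prod_{j=1}^{n-1}(1-\chi_{j}(t))$, the process~\eqref{10} reads $Y(t)=\sum_{k=1}^{n}z_{k}(t)g_{k}(t)$. By Lemma~\ref{lem2}, at every $t$ exactly one $z_{k}(t)$ equals $1$ and the remaining ones vanish.

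First, I would establish the pointwise identity
\begin{equation*}
e^{i\beta Y(t)} \;=\; \sum_{k=1}^{n} z_{k}(t)\, e^{i\beta g_{k}(t)}.
\end{equation*}
This is not a general algebraic identity but holds here because on every sample point only one $z_{k}$ contributes: if $z_{k^{*}}(t)=1$ then $Y(t)=g_{k^{*}}(t)$ and the right-hand side collapses to $e^{i\beta g_{k^{*}}(t)}$.

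Second, I would take expectations. By the hypothesis that the $\chi_{j}(t)$ are independent of each other and of all $g_{l}(t)$, the factor $z_{k}(t)$ — which is a function of $\chi_{1}(t),\ldots,\chi_{k}(t)$ — is independent of $g_{k}(t)$. Linearity of expectation then gives
\begin{equation*}
J(t) \;=\; \sum_{k=1}^{n} E[z_{k}(t)]\, E[e^{i\beta g_{k}(t)}].
\end{equation*}
To evaluate $E[z_{k}(t)]$, I would invoke Property~\ref{Proper2} and the independence of the $\chi_{j}$'s: since $E[\chi_{j}(t)]=p_{j}(t)$ and $E[1-\chi_{j}(t)]=1-p_{j}(t)$, we obtain $E[z_{k}(t)]=p_{k}(t)\prod_{j=1}^{k-1}(1-p_{j}(t))$ for $k<n$ and $E[z_{n}(t)]=\prod_{j=1}^{n-1}(1-p_{j}(t))$. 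Substituting yields precisely the formula for $J(t)$ in the statement.

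The single conceptual step is the linearization $e^{i\beta Y(t)}=\sum_{k}z_{k}(t)e^{i\beta g_{k}(t)}$; this is where the indicator nature of the $\chi_{j}$ (and hence of the $z_{k}$) is indispensable, as the analogous identity would fail for generic random weights. Everything else reduces to routine application of independence and of Properties~\ref{Proper1}--\ref{Proper2}, so I do not anticipate a real obstacle beyond clearly recording this one step.
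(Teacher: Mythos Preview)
Your argument is correct and rests on the same structural facts the paper uses (Lemma~\ref{lem2}, Property~\ref{Proper1}, Property~\ref{Proper2}, and the assumed independence). The only difference is in how the linearization $e^{i\beta Y(t)}=\sum_{k}z_{k}(t)e^{i\beta g_{k}(t)}$ is reached: the paper first writes $e^{i\beta Y(t)}$ as the product $\prod_{k}e^{i\beta z_{k}(t)g_{k}(t)}$ and then invokes the exponential series together with $(z_{k})^{m}=z_{k}$ and the incompatibility relations $z_{k}z_{l}=0$ to collapse the product, whereas you obtain the identity in one stroke by the casewise observation that exactly one $z_{k}(t)$ equals $1$. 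Your route is shorter and avoids any appeal to series manipulations; the paper's route makes more explicit use of Property~\ref{Proper1}. Either way the expectation step is identical, so there is no gap.
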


\begin{proof} Construct the characteristic function for the process \eqref{10}:
 %\begin{linenomath}
\begin{equation*}%\label{102}
  \begin{aligned}
    J(t) & = E[e^{i\beta Y(t)}] \\
    & = E \biggl[ \exp \biggl\{ i\beta\left(\chi_{1}(t)g_{1}(t)+ \sum\limits_{k=2}^{n-1}\chi_{k}(t)\prod\limits_{j=1}^{k-1}(1-\chi_{j}(t))g_{k}(t)+\prod\limits_{j=1}^{n-1}(1-\chi_{j}(t))g_{n}(t) \right)\biggl\} \biggr] \\
    & = E[e^{i\beta g_{1}(t)}] \prod\limits_{k=2}^{n-1} \exp \biggl\{ i\beta\chi_{1}(t)\prod\limits_{j=1}^{k-1}(1-\chi_{j}(t))g_{k}(t) \biggr\} \exp \biggl\{ i\beta \prod\limits_{j=1}^{n-1}(1-\chi_{j}(t))g_{n}(t) \biggr\}.
  \end{aligned}
\end{equation*}
%\end{linenomath}

Further, we apply the exponential series expansion, taking into account Lemma \ref{lem2}, Property \ref{Proper2}, and the mutual independence of $g_{k}(t)$ and $\chi_{l}(t)$ for any indices $k,l$. Next, we calculate the mathematical expectation, and taking into account \eqref{2}, we obtain the statement of the theorem.
\end{proof}

\section{Application of indicator random processes}

Let us examine several interesting examples of using indicator random processes to construct mathematical models of physical processes which associated with diffusion processes.

\subsection{Processes in an environment with jump-like changes in properties}

Using the properties of indicator random processes $\chi_{1}(t)$ and $\chi_{2}(t)$, we can construct an equation for dynamic processes with jump-like changes in properties. Let us consider the system of It\^o stochastic differential equations:
%\begin{linenomath}
\begin{equation*}%\label{105}
  \begin{aligned}
    dx(t) & = \chi_{1}(t) a_{1}(x(t),t)dt+\chi_{2}(t)B_{1}(x(t),t)d{\rm w}(t) \\
    & ~~~ {} + (1-\chi_{1}(t)) a_{2}(x(t),t)dt+(1-\chi_{2}(t))B_{2}(x(t),t)d{\rm w}(t),
  \end{aligned}
\end{equation*}
%\end{linenomath}
where, in the general case, $x(t), \, a_{j}(x(t),t)\in \mathbf{R}^{n}$, ${\rm w}(t)$ is the $m$-dimensional Wiener process with independent components, $B_{j}(x(t),t)$ is the matrix of size $n\times m$, $j=1,2$.

Using the indicator random process $\chi(t)$, one can also construct a model of the diffusion process with transitions from one subspace to another:
%\begin{linenomath}
\begin{equation}\label{106}
  \begin{aligned}
    dx(t) & = \chi(t)[a_{1}(x(t),y(t),t)dt+ B_{1}(x(t),y(t),t)d{\rm w}(t)],\\
    dy(t) & = (1-\chi(t))[a_{2}(x(t),y(t),t)dt+ B_{2}(x(t),y(t),t)d{\rm w}(t)],
  \end{aligned}
\end{equation}
%\end{linenomath}
etc. Such problems  arise in the course of simulating a diffusion process with a	 non-random modulus of speed, when the magnitude of the velocity modulus can change abruptly under the influence of external random disturbances, and remain constant between these jumps~\cite{ref-book6}.

If the coefficients of \eqref{106} satisfy to conditions
%\begin{linenomath}
\begin{equation*}%\label{106}
B_{j}(x(t),y(t),t)=0, \ \ \
a_{1}(x(t),y(t),t)=-a_{2}(x(t),y(t),t)=-c, \ \ \ c=const>0,
\end{equation*}
%\end{linenomath}
then the process $x(t)+y(t)$ corresponds to the Kac model of particle motion with random changes in the direction of velocity \cite{ref-book3}.

\subsection{Diffusion process with random time delay centers}

We propose a new version of the model of dynamic process with delay centers employs indicator random processes.

A delay center, or a time absorption center, arises, for example, for queuing processes: a device becomes a delay center when serving a customer.
For the diffusion process, such centers can be considered points in space where a particle temporarily stuck and leaves them at a random moment in time.

We will correlate the randomness of the moment of stopping the particle’s movement and the moment of resumption its movement with a random function of a non-random integer value $N(t)$ with independent increments (indicator random process) \cite{ref-proceeding}:
%\begin{linenomath}
\begin{equation*}%\label{11}
\chi(t)=\bar{\chi}(N(t)).
\end{equation*}
%\end{linenomath}

\noindent
\begin{dfn}\label{df2}
An indicator random function $\chi(t)$ is called conditionally periodic if it satisfies the condition
%\begin{linenomath}
\begin{equation}\label{12}
\chi(t)=\bar{\chi}(N(t)+2k),\ \ \ \ \ k\in\mathbf{N}.
\end{equation}
%\end{linenomath}
\end{dfn}

The conditions \eqref{1}, \eqref{12} are satisfied by the function
%\begin{linenomath}
\begin{equation}\label{13}
\chi(t)=\bar{\chi}(N(t))=  0.5 (1+ \cos[\pi N(t)]).
\end{equation}
%\end{linenomath}

\noindent
This function has the following properties:
%\begin{linenomath}
\begin{equation*}%\label{14}
  \chi(t)= \begin{cases}
      0.5 (1+ \cos[\pi N(t)])=1,& \text{for} \ \ N(t)=2s, \ \ s\in \mathbf{N}\cup\{0\}, \\
      0.5 (1+ \cos[\pi N(t)])=0, & \text{for} \ \ N(t)=2s+1, \ \ s\in \mathbf{N}\cup\{0\}.
  \end{cases}
\end{equation*}
%\end{linenomath}
For example, as $N(t)$ we can take a homogeneous Poisson process: $E[N(t)]=\mu t$. Then $E[\bar{\chi}(N(t))]= 0.5(1 + \exp\{-2\mu t\})$.

To model diffusion processes that have centers of random time delay (diffusion interruption) and subsequent restoration of the movement process, let us use the following It\^o stochastic differential equation:
%\begin{linenomath}
\begin{equation}\label{15}
dx(t)=\chi(t)[a(x(t),t)dt+B(x(t),t)d{\rm w}(t)],
\end{equation}
%\end{linenomath}
where, in the general case, $x(t), \, a(x(t),t)\in \mathbf{R}^{n}$, ${\rm w}(t)$ is the $m$-dimensional Wiener process with independent components, $B(x(t),t)$ is the matrix of size $n\times m$.

As is known,  diffusion occurs by several mechanisms.
Surface diffusion is a general process involving the motion of molecules, and atomic clusters at solid material surfaces, and the corresponding model is the system of equations \eqref{15} in $\mathbf{R}^{2}$. Bulk diffusion, i. e. diffusion in the bulk of the material, can be modelled  by the system of equations \eqref{15} in $\mathbf{R}^{3}$.

Let us consider the equation
%\begin{linenomath}
\begin{equation}\label{1006}
dx(t)=a(t)dt+  0.5 (1+ \cos[\pi N(t)])b(t)d{\rm w}(t), \ \ \ x(0)=0.
\end{equation}
%\end{linenomath}
This model corresponds to the case when the state of the system changes according to a deterministic law, and then, over a random period of time, it is affected by random disturbances. Let us make a change of variables:
%\begin{linenomath}
\begin{equation*}%\label{1007}
y(t)=x(t)-\displaystyle \int_{0}^{t}a(\tau)d\tau.
\end{equation*}
%\end{linenomath}
Then the equation \eqref{1006} takes the form
%\begin{linenomath}
\begin{equation}\label{1008}
dy(t)=  0.5 (1+\cos[\pi N(t)])b(t)d{\rm w}(t),
\end{equation}
%\end{linenomath}
where $y(t), \, a(t), \, b(t), \,\in \mathbf{R}$, $ {\rm w}(t) $ is one-dimensional Wiener process, $N(t)$ is Poisson process, and $N(t)$ and ${\rm w}(t)$ are mutually independent ones. Suppose, that $y(0)=0$.

The characteristic function for the random process \eqref{1008} is
%\begin{linenomath}
\begin{equation*}%\label{1008}
J_{1}(t) = E[e^{i\beta y(t)}] = E \biggl[ \exp \biggl\{ 0.5 i\beta \displaystyle \int_{0}^{t}(1+ \cos[\pi N(\tau)])b(\tau)d{\rm w}(\tau) \biggr\} \biggl].
\end{equation*}
%\end{linenomath}

\noindent
If, during the simulation, we take into account that the average time interval between a particle being at rest and in motion are different, then as $N(t)$ we can choose, for example, the random process $N(t)=N_{1}(t)N_{ 2}(t)$, where $N_{1}(t)$, $N_{2}(t)$ are independent Poisson processes for which $E[N_{1}(t)]=\mu_{1} t$, $E[N_{2}(t)]=\mu_{2} t$. Since the relation holds
%\begin{linenomath}
\begin{equation*}%\label{16}
  \begin{aligned}
    & \sum_{s=0}^{\infty}Prob(N(t)=2s) \\
    & ~~~ = 1-\sum_{s_{1}=0}^{\infty}Prob(N_{1}(t)=2s_{1}+1) \sum_{s_{2}=0}^{\infty}Prob(N_{2}(t)=2s_{2}+1),
  \end{aligned}
\end{equation*}
%\end{linenomath}
i.e., the probability of a particle in motion will be greater than the probability of being at rest.

If the situation is the opposite, then we proceed to using the process $$\hat{\chi}(t)=\chi(N(t)+1).$$

\noindent
Applying It\^o formula, taking into account \eqref{1}, and using the explicit form of the Poisson distribution \cite{ref-book2}, we obtain
%\begin{linenomath}
\begin{equation}\label{18}
  \begin{aligned}
    D(t) = \displaystyle\frac{dE\left[ y^{2}(t)\right]}{dt} & = 0.5 b^{2}  \langle 1+ \cos[\pi N(t)] \rangle \\
    & = \displaystyle\frac{1}{4} b^{2}[4- (1-e^{-2\mu_{1}t})(1-e^{-2\mu_{2}t})],
  \end{aligned}
\end{equation}
%\end{linenomath}
where $b^{2}$ is the diffusion coefficient without time delay centers.
It follows from equality \eqref{18} that
%\begin{linenomath}
\begin{equation*}%\label{19}
D(0)=b^{2}, \ \ \ D(\infty)=\displaystyle\frac{3}{4}b^{2}.
\end{equation*}
%\end{linenomath}

Note that the difference between the probabilities for even and odd $N(t)$ is due to the fact that $N(t)\in \mathbf{N}\cup \{0\}$ and $N(0)=0$, i.e., $N(t)$ does not start with an odd number. This leads to a discrepancy among the analytical expressions for the probabilities:
%\begin{linenomath}
\begin{equation*}%\label{22}
  \begin{aligned}
    \sum_{s=0}^{\infty}Prob(N(t) = 2s) & = e^{-\mu t}\sum_{s=0}^{\infty}\displaystyle\frac{(\mu t)^{2s}}{(2s)!} = e^{-\mu t}\cosh \mu t = 0.5 (1+e^{-2\mu t}), \\
    \sum_{s=0}^{\infty}Prob(N(t)=2s+1) & = e^{-\mu t}\sum_{s=0}^{\infty}\displaystyle\frac{(\mu t)^{2s+1}}{(2s+1)!} = e^{-\mu t}\sinh \mu t = 0.5 (1-e^{-2\mu t}) \\
    & = 1-\sum_{s=0}^{\infty}Prob(N(t)=2s).
  \end{aligned}
\end{equation*}
%\end{linenomath}
The equality of probabilities is only asymptotic:
%\begin{linenomath}
\begin{equation*}%\label{22}
\lim\limits_{t\to \infty}\sum_{s=0}^{\infty}Prob(N(t)=2s)=
 \lim\limits_{t\to \infty}\sum_{s=0}^{\infty}Prob(N(t)=2s+1) = 0.5 .
\end{equation*}
%\end{linenomath}
Thus, if in a homogeneous media there is a nonlinear time dependence of the average square displacement of a particle and an asymptotic decrease in the diffusion coefficient, we can conclude that there exist time delay centers.

\subsection{Diffusion with random change in direction of movement. The Kac model}

Let us consider one-dimensional particle movement with speed $v$, when the direction of movement changes at random times \cite{ref-book3}:
%\begin{linenomath}
\begin{equation*}%\label{202}
\left\{
  \begin{aligned}
    dx^{+}(t) & = \chi(t)vdt, \\
    dx^{-}(t) & = -(1-\chi(t))vdt.
  \end{aligned}
\right.
\end{equation*}
%\end{linenomath}

\noindent
Since we are  interested in the total displacement $x(t)=x^{+}(t)+x^{-}(t)$, we obtain the equation
%\begin{linenomath}
\begin{equation}\label{2012}
dx(t)=(2\chi(t)-1)vdt.
\end{equation}
%\end{linenomath}

For $\chi(t)$ we take the representation \eqref{13}:
$
\chi(t)=\bar{\chi}(N(t)) = 0.5 (1+ \cos[\pi N(t)]).
$
Let $v=const=c>0$.
Then the characteristic function for the process $x(t)$ takes the form:
%\begin{linenomath}
\begin{equation}\label{2008}
I(t) = E \biggl[ \exp \biggl\{i\beta \int_{0}^{t} \cos[\pi N(\tau)] c d\tau \biggr\} \biggr].
\end{equation}
%\end{linenomath}

\begin{Thm}\label{th2}
If $N(t)$ is the stationary Poisson process with parameter $\lambda$, the characteristic function for the process $x(t)$ subordinate to the system \eqref{2012} is a solution to the Cauchy problem:
%\begin{linenomath}
\begin{equation}\label{2009}
\displaystyle \frac{d^{2}I(t)}{dt^{2}}+ 2\lambda \frac{d I(t)}{dt}+ c^{2}\beta^{2}I(t)=0, \ \ \ I(0)=0, \ \ \  \frac{d I(0)}{dt}=ic\beta.
\end{equation}
%\end{linenomath}
\end{Thm}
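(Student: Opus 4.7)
My plan is to introduce the auxiliary signed process $y(t) = \cos[\pi N(t)] = 2\chi(t)-1$, which takes values in $\{-1,+1\}$ and, by Property \ref{Proper1}, satisfies $y(t)^2 \equiv 1$. Since $y$ flips sign at every Poisson jump, one has $dy(t) = -2y(t^-)\,dN(t)$. Meanwhile $x(t) = c\int_0^t y(\tau)\,d\tau$ is continuous with $dx(t) = c\,y(t)\,dt$, so the characteristic function \eqref{2008} is simply $I(t) = E[e^{i\beta x(t)}]$.

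The first step is to obtain a first-order ODE linking $I(t)$ to the auxiliary quantity $G(t) := E[y(t)e^{i\beta x(t)}]$. Because $x$ has no jumps, $\frac{d}{dt}e^{i\beta x(t)} = i\beta c\, y(t)\, e^{i\beta x(t)}$, and taking expectations gives $I'(t) = i\beta c\, G(t)$. The second step is to apply the It\^o formula for jump processes to the product $y(t)\, e^{i\beta x(t)}$: the absolutely continuous part produces $i\beta c\, y(t)^2\, e^{i\beta x(t)}\,dt = i\beta c\, e^{i\beta x(t)}\,dt$ (using $y^2\equiv 1$), while the pure-jump part produces $-2\, y(t^-)\, e^{i\beta x(t)}\,dN(t)$. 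Passing to expectation and using the compensator $E[dN(t)] = \lambda\,dt$ of the stationary Poisson process yields
$$G'(t) = i\beta c\,I(t) - 2\lambda\,G(t).$$

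Eliminating $G$ then finishes the argument: differentiating $I' = i\beta c\, G$ gives $I'' = i\beta c\, G' = i\beta c(i\beta c\, I - 2\lambda G) = -c^2\beta^2 I - 2\lambda I'$, which rearranges to the advertised telegraph-type equation $I''(t) + 2\lambda I'(t) + c^2\beta^2 I(t) = 0$. The initial conditions come from $x(0)=0$, giving $I(0)=1$ (the $I(0)=0$ in the statement appears to be a typographical slip for $x(0)=0$), and from $N(0)=0$, whence $y(0)=1$ and therefore $I'(0) = i\beta c\, G(0) = i\beta c$.

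The most delicate point is the application of the jump It\^o formula to $y(t)\, e^{i\beta x(t)}$: one must treat $y(t^-)\, e^{i\beta x(t)}$ carefully as a predictable integrand against $dN(t)$, invoke the Poisson compensation identity to pass to expectation, and exploit the algebraic identity $y^2\equiv 1$ (which is Property \ref{Proper1} applied to the indicator structure $y = 2\chi - 1$) to close the system with only the two unknowns $I$ and $G$ rather than an infinite hierarchy of moments. An equivalent alternative, if one prefers to avoid stochastic calculus, is to split $I(t) = I_{+}(t) + I_{-}(t)$ according to the parity of $N(t)$ and derive a $2\times 2$ linear ODE system for $(I_{+},I_{-})$ by infinitesimal conditioning on the first Poisson jump in $(t,t+h]$; adding and subtracting these equations again produces the same second-order ODE.
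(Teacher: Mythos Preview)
Your proof is correct and follows essentially the same route as the paper. The paper differentiates $I(t)$ once to obtain exactly your $i\beta c\,G(t)$ (their equation \eqref{2019}), then applies the jump increment formula $df(N(t))=[f(N(t)+1)-f(N(t))]\,dN(t)$ together with $\cos^{2}[\pi N(t)]\equiv 1$ and $\cos[\pi(N(t)+1)]-\cos[\pi N(t)]=-2\cos[\pi N(t)]$ to differentiate a second time, arriving directly at $I''=-c^{2}\beta^{2}I-2\lambda I'$; your explicit introduction of the auxiliary $G$ and the $2\times 2$ system is the same computation, merely organized more transparently, and your remark that $I(0)=0$ should read $I(0)=1$ is well taken.
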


\begin{proof}
Let us differentiate \eqref{2008}:
%\begin{linenomath}
\begin{equation}\label{2019}
  \frac{dI(t)}{dt} = E \biggl[ i\beta\cos[\pi N(t)]c\exp \biggl\{ i\beta\displaystyle \int_{0}^{t} \cos[\pi N(\tau)] c d\tau \biggr\} \biggr].
\end{equation}
%\end{linenomath}

\noindent
Let $f(x)\in \mathcal{C}^{\infty}$. Then
%\begin{linenomath}
\begin{equation}\label{20209}
df(N(t))=[f(N(t)+1)-f(N(t))]d N(t),
\end{equation}
%\end{linenomath}
where $dN(t)$ is an advanced increment, i.e., it is independent of the previous values of $N(t)$. Since $E[dN(t)]=\lambda dt,\, \lambda>0$, then taking into account \eqref{2019}, we obtain
%\begin{linenomath}
\begin{equation*}%\label{2020}
  \begin{aligned}
    d\frac{dI(t)}{dt} & = -c^{2}\beta^{2}E \biggl[ \cos^{2}[\pi N(t)] \exp \biggl\{ i\beta \int_{0}^{t}\cos[\pi N(\tau)]cd\tau \biggr\} \biggr]dt \\
    & ~~~ {} + i\beta c E \biggl[ (\cos[\pi(N(t)+1)]-\cos[\pi N(t)]) \exp \biggl\{ i\beta \int_{0}^{t}\cos[\pi N(\tau)] cd\tau \biggr\} \biggr] \lambda dt \\
    & = -c^{2}\beta^{2} E \biggl[ \exp \biggl\{ i\beta \int_{0}^{t}\cos[\pi N(\tau)]cd\tau \biggr\} \biggr] dt \\
    & ~~~ {} + 2 i\beta c \lambda E \biggl[ \cos[\pi N(t)] \exp \biggl\{ i\beta \int_{0}^{t}\cos[\pi N(\tau)]cd\tau \biggr\} \biggr] dt \\
    & = -c^{2}\beta^{2}I(t)dt-2\lambda \frac{dI(t)}{dt} dt.
  \end{aligned}
\end{equation*}
%\end{linenomath}
From the last equality we obtain the statement of the theorem.
\end{proof}

\begin{Rem} As is known, the characteristic function allows one to find the distribution density function. Applying the inverse Fourier transform to the equation \eqref{2009} from the Theorem \ref{th2}, we obtain the telegraph equation for the distribution density function $\rho(x,t)$:
%\begin{linenomath}
\begin{equation*}%\label{2029}
\frac{\partial^{2}\rho(x,t)}{\partial t^{2}}+ 2\lambda \frac{\partial \rho(x,t)}{\partial t}- c^{2}\frac{\partial^{2}\rho(x,t)}{\partial x^{2}}=0,
\end{equation*}
%\end{linenomath}
which coincides with the results obtained \cite{ref-book3}.
\end{Rem}

Note that the Kac model is finding new applications. In particular, it is used to study the model of random colliding particles interacting with the infinite reservoir at a fixed temperature and chemical potential \cite{ref-book7}. This is the so-called thermostat problem, in which particles can leave the system towards the reservoir or enter the system from the reservoir at random times. Accordingly, the proposed random indicator process method can also be used to solve the thermostat problem.

\subsection{Diffusion model with random transitions from one subspace to another. 2-dimensional case}

Let us consider  the following diffusion model:
%\begin{linenomath}
\begin{equation}\label{1060}
\left\{
  \begin{aligned}
    dx(t) & = \tilde{\chi}(t)bd{\rm w}(t),\\
    dy(t) & = (1-\tilde{\chi}(t))bd{\rm w}(t),
  \end{aligned}
\right.
\end{equation}
%\end{linenomath}
where $b$ is the diffusion coefficient, i.e., at random moments of time the process occurs either in the space $x(t)$ or $y(t)$. Such models can describe the diffusion process in random porous media.
As a random process $\tilde{\chi}(t)$ we choose the representation
$
\bar{\chi}(t)=   0.5 (1+ \cos N(t)).
$
Then the characteristic function for the process $\{x(t),y(t)\}$, in accordance with \eqref{1060}, takes the form:
%\begin{linenomath}
\begin{equation*}%\label{2021}
  \begin{aligned}
    J(t) & = E \biggl[ \exp \biggl\{ i\alpha\displaystyle \int_{0}^{t}0.5(1+\cos N(\tau))bd{\rm w}(\tau) + i\beta\displaystyle \int_{0}^{t}0.5(1-\cos N(\tau)) bd{\rm w}(\tau) \biggr\} \biggr].
  \end{aligned}
\end{equation*}
%\end{linenomath}

\begin{Thm}\label{th3}
If $(N(t)/\pi)$ is the stationary Poisson process with parameter $\lambda$, $J(t)$ is a solution to the equation:
%\begin{linenomath}
\begin{equation}\label{2022}
\frac{d^{2}J(t)}{dt^{2}}+ 0.5(4\lambda+[\alpha^{2}+\beta^{2}]b^{2} \frac{d J(t)}{dt}+ 0.5(\lambda\alpha^{2}+0.5\alpha^{2}\beta^{2}b^{2}+\lambda\beta^{2})b^{2}\beta^{2}J(t)=0.
\end{equation}
%\end{linenomath}
\end{Thm}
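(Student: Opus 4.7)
The plan is to combine the two stochastic integrals in the exponent of $J(t)$ into a single one, apply It\^o's formula to the resulting exponential, derive a first-order equation coupling $J(t)$ to an auxiliary expectation $K(t)=E[F(t)\cos N(t)]$, derive a companion equation for $K(t)$ via the jump-differential rule of the form \eqref{20209}, and finally eliminate $K$ to obtain the claimed second-order ODE.

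First I set $\alpha_1=0.5(\alpha+\beta)$, $\beta_1=0.5(\alpha-\beta)$, so that the exponent of $J(t)$ combines into $iM(t)$ with
\begin{equation*}
M(t)=\int_0^t b\bigl(\alpha_1+\beta_1\cos N(\tau)\bigr)\,d{\rm w}(\tau).
\end{equation*}
Writing $F(t)=e^{iM(t)}$ and applying It\^o's formula gives $dF=iF\,dM-\tfrac{1}{2}F(dM)^2$ with $(dM)^2=b^2(\alpha_1+\beta_1\cos N)^2\,dt$. Since $N(t)/\pi$ is integer-valued, $\cos N(t)\in\{\pm 1\}$ and $\cos^2 N(t)=1$, hence $(\alpha_1+\beta_1\cos N)^2=(\alpha_1^2+\beta_1^2)+2\alpha_1\beta_1\cos N$. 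Taking expectation yields
\begin{equation*}
\frac{dJ(t)}{dt}=-\frac{b^2}{2}\bigl[(\alpha_1^2+\beta_1^2)J(t)+2\alpha_1\beta_1 K(t)\bigr].
\end{equation*}

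For the second relation, I use that $F(t)$ is continuous in $t$ (since $M$ is a stochastic integral against ${\rm w}$, even though its integrand depends on $N$), so the product rule for $F\cos N$ carries no cross-variation term: $d(F\cos N)=F\,d\cos N+\cos N\,dF$. Applying \eqref{20209} to the Poisson process $\tilde N=N/\pi$ with the function $\cos(\pi\cdot)$ gives $d\cos N(t)=-2\cos N(t)\,d\tilde N(t)$, and $E[d\tilde N(t)]=\lambda\,dt$. Using $\cos^2 N=1$ a second time to simplify the contribution of $(dM)^2\cos N$ in $\cos N\,dF$, I obtain
\begin{equation*}
\frac{dK(t)}{dt}=-\Bigl[2\lambda+\frac{b^2}{2}(\alpha_1^2+\beta_1^2)\Bigr]K(t)-b^2\alpha_1\beta_1 J(t).
\end{equation*}

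The pair $(J,K)$ now satisfies a $2\times 2$ linear system with constant coefficients. Solving the $J$-equation for $K$, differentiating, and substituting gives $J''+2(\lambda+A)J'+[A(2\lambda+A)-B^2]J=0$ with $A=\tfrac{b^2}{4}(\alpha^2+\beta^2)$ and $B=\tfrac{b^2}{4}(\alpha^2-\beta^2)$. The coefficient of $J'$ equals $0.5(4\lambda+b^2(\alpha^2+\beta^2))$, and using $(\alpha^2+\beta^2)^2-(\alpha^2-\beta^2)^2=4\alpha^2\beta^2$ the constant term simplifies to $0.5\,b^2(\lambda\alpha^2+\lambda\beta^2+0.5\,b^2\alpha^2\beta^2)$, matching the coefficients in \eqref{2022}. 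The main obstacle is the correct application of the product rule to $F(t)\cos N(t)$: one must recognise that $F$ has no jumps, so the only contribution from the pure-jump part of $\cos N$ enters linearly and is handled by replacing $d\tilde N$ with its compensator $\lambda\,dt$ after taking expectation. Once this is done, the remaining work is routine linear algebra and the two applications of $\cos^2 N=1$.
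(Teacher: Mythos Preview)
Your argument is correct, and it reaches the same second-order ODE as the paper (modulo obvious typos in the displayed equation \eqref{2022}), but the route is genuinely different. The paper's first move is to integrate out the Brownian motion: conditioning on $N(\cdot)$ and using the Gaussian moment-generating formula, it replaces $E[\exp\{i\int g\,d{\rm w}\}]$ by $E[\exp\{-\tfrac12\int g^2\,d\tau\}]$, arriving at $J(t)=E[f(t)]$ with $f$ as in \eqref{20230}. From there the paper differentiates $J$ twice in $t$ by brute force, using the jump rule \eqref{20209} for $\cos N$ and the algebraic identities $(1\pm\cos N)^2=2(1\pm\cos N)$, and then massages the resulting expression until every term is recognised as a multiple of $J$ or of $dJ/dt$. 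You instead keep the stochastic integral, apply It\^o's formula to $F=e^{iM}$, drop the $d{\rm w}$-part by the martingale property, and make the hidden linear structure explicit by introducing $K(t)=E[F\cos N]$ and writing down the $2\times2$ constant-coefficient system for $(J,K)$ before eliminating $K$. Your organisation is cleaner and makes the origin of the second-order equation transparent; the paper's approach has the small conceptual advantage that after \eqref{2023} only the Poisson randomness remains, so all subsequent calculus is ordinary differentiation plus the single jump rule. Your handling of the product rule for $F\cos N$ is the right one: $F$ is continuous (its driving noise is ${\rm w}$), $\cos N$ is pure jump, so the covariation vanishes, and replacing $d\tilde N$ by its compensator $\lambda\,dt$ after taking expectation is exactly the step the paper performs implicitly.
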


\begin{proof}
Since the processes $N(t)$ and ${\rm w}(t)$ are independent, we obtain
%\begin{linenomath}
\begin{equation}\label{2023}
  \begin{aligned}
    J(t) & = E \biggl[ \exp \biggl\{ i \int_{0}^{t} 0.5 [(\alpha+\beta)+(\alpha-\beta)\cos N(t) ]bd{\rm w}(\tau) \biggr\} \biggr] \\
    & = E \biggl[ \exp \biggl\{ -\int_{0}^{t}2^{-3}[(\alpha+\beta)+(\alpha-\beta)\cos N(t) ]^{2}b^{2}d\tau \biggr\} \biggr] \\
    & = E \biggl[ \exp \biggl\{ -\int_{0}^{t}2^{-3}[(\alpha+\beta)^{2}+2(\alpha^{2}-\beta^{2})\cos N(t) + (\alpha-\beta)^{2}]b^{2}d\tau \biggr\} \biggr] \\
    & = E \biggl[ \exp \biggl\{ -\alpha^{2}\displaystyle \int_{0}^{t}2^{-2}(1+\cos N(t))b^{2}d\tau - \beta^{2} \int_{0}^{t}2^{-2}(1-\cos N(t))b^{2}d\tau \biggr\} \biggr].
  \end{aligned}
\end{equation}
%\end{linenomath}

\noindent
For compactness, let us denote by $f(t)$ the last expression under the mathematical expectation sign in \eqref{2023}:
%\begin{linenomath}
\begin{equation}\label{20230}
 f(t)= \exp \biggl\{ -\alpha^{2}\displaystyle \int_{0}^{t}2^{-2}(1+\cos N(t))b^{2}d\tau - \beta^{2} \int_{0}^{t}2^{-2}(1-\cos N(t))b^{2}d\tau\biggr\} .
\end{equation}
%\end{linenomath}

Next we get
%\begin{linenomath}
\begin{equation}\label{141a}
\displaystyle\frac{dJ(t)}{dt} =-E[f(t)\{ \alpha ^{2} 2^{-2} (1+\cos N(t) )b^{2} +\beta ^{2} 2^{-2} (1-\cos N(t) )b^{2} \} ].
\end{equation}
%\end{linenomath}

\noindent
Taking into account that $dN(t)$ is an advanced increment, i.e., it does not depend on the previous values of $N(t)$, for which, due to the properties of the Poisson distribution,
$$
E[d(N(t)/\pi)]=\lambda dt, \ \ \ \lambda >0.
$$
Let us calculate the differential (compare with \eqref{20209}):
$$
d \cos N(t) =[\cos(N(t)+\pi)-\cos N(t) ]d(N(t)/\pi) = -2\cos N(t) d(N(t)/\pi).
$$
The process $\cos N(t) $ have the following properties:
%\begin{linenomath}
\begin{equation*}%\label{41aa}
  \begin{gathered}
    \cos N(t) \cos N(t) \equiv 1, \\
    (1-\cos N(t))(1+\cos N(t) )=1-\cos^{2} N(t) \equiv 0, \\
    (1-\cos N(t))^{2} = 1-\cos N(t),\\
    (1+\cos N(t))^{2} = 1+\cos N(t).
  \end{gathered}
\end{equation*}
%\end{linenomath}

\noindent
Then
%\begin{linenomath}
\begin{equation*}%\label{41b}
  \begin{aligned}
    \frac{d^{2} J(t)}{dt^{2}} & = E[f(t)\{ 0.5 \alpha ^{2} b^{2} \cos N(t) -\beta ^{2} 2^{-2} 2b^{2} \cos N(t) \} ]\lambda \\
    & ~~~ {} + E[f(t)\{ \alpha ^{2} 2^{-2} (1+\cos N(t) )b^{2} +\beta ^{2} 2^{-2} (1-\cos N(t) )^{2} b^{4} \} ^{2} ] \\
    & = E[f(t)\{0.5  \alpha ^{2} b^{2} \cos N(t) -0.5 \beta ^{2}  b^{2} \cos N(t) \} ]\lambda \\
    & ~~~ {} + E[f(t)\{ \alpha ^{4} 2^{-4} (1+\cos N(t) )^{2} b^{4} +\beta ^{4} 2^{-4} (1-\cos N(t) )^{2} b^{4} \} ] \\
    & =\lambda E[f(t)\{ 0.5 \alpha ^{2} (1+\cos N(t) )b^{2} +0.5  \beta ^{2}  (1-(\cos N(t) ))b^{2} \} ] \\
    & ~~~ {} - 0.5  \lambda (\alpha ^{2} +\beta ^{2} )b^{2} J(t) \\
    & ~~~ {} + E[f(t)\{ \alpha ^{4} 2^{-3} (1+\cos N(t) b^{4} +\beta ^{4} 2^{-3} (1-\cos N(t) )b^{4} \} ].
  \end{aligned}
\end{equation*}
%\end{linenomath}
This imply that
%\begin{linenomath}
\begin{equation}\label{2026}
  \begin{aligned}
    \frac{\partial^{2} J(t)}{\partial t^{2}} & = -2\lambda\frac{\partial J(t)}{\partial t} - 0.5 \lambda(\alpha^{2}+\beta^{2})b^{2}    J(t) \\
    & ~~~ {} + E[f(t)\alpha^{4}2^{-3}(1+ \cos N(t) )b^{4}]+E[f(t)\beta^{4}2^{-3}(1- \cos N(t) )b^{4}].
  \end{aligned}
\end{equation}
%\end{linenomath}

Let us continue the transformation for the last terms in \eqref{2026}. For simplicity, let us examine each of them separately.

Considering the first term:
%\begin{linenomath}
\begin{equation*}%\label{41d}
  \begin{aligned}
    & E[f(t)\alpha ^{4} 2^{-3} (1+\cos N(t) )b^{4} ] \\
    & ~~~ = 0.5\alpha ^{2} b^{2} E[f(t)\{ \alpha ^{2} 2^{-2} (1+\cos N(t) )b^{2} +\beta ^{2} 2^{-2} (1-\cos N(t) )b^{2} \} ] \\
    & ~~~ ~~~ {} -\alpha ^{2} \beta ^{2} b^{4} E[f(t)2^{-3} (1-\cos N(t) \} ],
  \end{aligned}
\end{equation*}
%\end{linenomath}
and taking into account \eqref{141a}, we have
%\begin{linenomath}
\begin{equation}\label{41w}
  \begin{aligned}
    & E[f(t)\alpha ^{4} 2^{-3} (1+\cos N(t) )b^{4}  ] \\
    & ~~~ = -0.5 \alpha ^{2}  b^{2}\displaystyle \frac{dJ(t)}{dt} - \alpha ^{2} \beta ^{2} b^{4} E[f(t)2^{-3} (1-\cos N(t) \} ].
  \end{aligned}
\end{equation}
%\end{linenomath}

\noindent
Let us transform the second term:
%\begin{linenomath}
\begin{equation}\label{41ws}
  \begin{aligned}
    & E[f(t)\beta ^{4} 2^{-3} (1-\cos N(t) )b^{4} ] \\
    & ~~~ = 0.5  \beta ^{2} b^{2} E[f(t)\{ \alpha ^{2} 2^{-2} (1+\cos N(t) )b^{2} +\beta ^{2} 2^{-2} (1-\cos N(t) )b^{2} \} ] \\
    & ~~~ ~~~ {} -\alpha ^{2} \beta ^{2} b^{4} E[f(t)2^{-3} (1+\cos N(t) )] \\
    & ~~~ = -0.5  \beta ^{2}  b^{2} \frac{dJ(t)}{dt} -\alpha ^{2} \beta ^{2} b^{4} E[f(t)2^{-3} (1+\cos  N(t) )]
  \end{aligned}
\end{equation}
%\end{linenomath}

\noindent
Substituting the resulting representations \eqref{41w} and \eqref{41ws} into \eqref{2026}, we obtain:
%\begin{linenomath}
\begin{equation*}%\label{41s}
  \alpha ^{2} \beta ^{2} b^{4} E[f(t)2^{-3} (1-\cos N(t)  ]+\alpha ^{2} \beta ^{2} b^{4} E[f(t)2^{-3} (1+\cos N(t) ) ] = \alpha ^{2} \beta ^{2} b^{4} 2^{-2} J(t).
\end{equation*}
%\end{linenomath}
As a result, taking into account the notation $E[f(t)]= J(t)$, we obtain
%\begin{linenomath}
\begin{equation*}%\label{41t}
  \begin{aligned}
    \frac{d^{2} J(t)}{dt^{2} } & = -2\lambda \frac{dJ(t)}{dt} - 0.5  \lambda (\alpha ^{2} +\beta ^{2} )b^{2} J(t) \\
    & ~~~ {} - 0.5  \alpha ^{2} \beta ^{2}  b^{2} \displaystyle\frac{dJ(t)}{dt} -\alpha ^{2} \beta ^{2} b^{4} 2^{-2} E[f(t)] \\
    & = -(2\lambda +0.5  \alpha ^{2} \beta ^{2}  b^{2} )\displaystyle\frac{dJ(t)}{dt} - [ \alpha ^{2} \beta ^{2} b^{4} 2^{-2} + 0.5  \lambda (\alpha ^{2} +\beta ^{2} )b^{2} ]J(t)
  \end{aligned}
\end{equation*}
%\end{linenomath}
The statement of Theorem \ref{th3} is obtained.
\end{proof}

The equation \eqref{2022} can be solved. However, its construction will be cumbersome.
Using the result obtained in Theorem \ref{th3}, we find an explicit view of the characteristic function for a simpler process.
We will solve the equation for a simpler model, and then we will show how to move to the explicit form of solving the equation \eqref{2022}.

As such a simple model, consider the characteristic function \eqref{1008} for the diffusion model with delay centers:
%\begin{linenomath}
\begin{equation*}%\label{2009}
\frac{d^{2}I(t)}{dt^{2}}+ 2\lambda \frac{d I(t)}{dt}+ c^{2}\beta^{2}I(t)=0, \ \ \ I(0)=0, \ \ \  \frac{d I(0)}{dt}=ic\beta.
\end{equation*}
%\end{linenomath}

The equation for the characteristic function $J_{1}(t)$ should coincide with \eqref{2022} if we set $\alpha=0$. Formally, this corresponds to the following substitutions of coefficients in \eqref{2022}:
%\begin{linenomath}
\begin{equation}\label{2030}
  \begin{aligned}
    & 0.5 (4\lambda+[\alpha^{2}+\beta^{2}]b^{2})  & & \Rightarrow & & 0.5 (4\lambda + \beta^{2}b^{2}), \\
    & 0.5 (\lambda\alpha^{2} + 0.5\alpha^{2}\beta^{2} +\lambda\beta^{2})b^{2} & & \Rightarrow & & 0.5 \lambda\beta^{2}b^{2}.
  \end{aligned}
\end{equation}
%\end{linenomath}

\noindent
Accordingly, the characteristic function $J_{1}(t)$ for the diffusion model with delay centers is a solution to the equation:
%\begin{linenomath}
\begin{equation}\label{2032}
\frac{d^{2}J_{1}(t)}{dt^{2}}+ 0.5(\beta^{2} b^{2}+ 4\lambda) \frac{d J_{1}(t)}{dt}+ 0.5\lambda\beta^{2}b^{2}J_{1}(t)=0.
\end{equation}
%\end{linenomath}
Since the conditions are satisfied
%\begin{linenomath}
\begin{equation*}%\label{2033}
  \begin{aligned}
    & 2^{-2} (\beta^{2} b^{2}+ 4\lambda)^{2}- 2\lambda\beta^{2}b^{2} \\
    & ~~~ = 2^{-2}\beta^{4} b^{4}+2\beta^{2} b^{2}\lambda+4\lambda^{2}-2\beta^{2}b^{2}\lambda \\
    & ~~~ = 2^{-2}\beta^{4} b^{4}+4\lambda^{2}>0 \ \ \ \forall\,\lambda>0,
  \end{aligned}
\end{equation*}
%\end{linenomath}
then the solution to the equation \eqref{2032} (\cite{ref-book1}, p. 375, formula 235(a)) will be as follows:
%\begin{linenomath}
\begin{equation*}%\label{2034}
  \begin{aligned}
    J_{1}(t) & = C_{1}\exp\biggl\{ -0.5 t\displaystyle(\beta^{2} b^{2}+ 4\lambda)+t\sqrt{\frac{1}{4}\beta^{4} b^{4}+4\lambda^{2}} \biggr\} \\
    & ~~~ {} + C_{2}\exp\biggl\{ -0.5 t\displaystyle (\beta^{2} b^{2}+ 4\lambda)-t\sqrt{\frac{1}{4}\beta^{4} b^{4}+4\lambda^{2}} \biggr\}.
  \end{aligned}
\end{equation*}
%\end{linenomath}
Since $\displaystyle\frac{1}{4}(\beta^{2} b^{2}+ 4\lambda)^{2}>\frac{1}{4}\beta^{4} b^ {4}+4\lambda^{2}$, then the first and second terms decrease with increasing $t$, and therefore $\lim\limits_{t\to \infty}J_{1}(t)=0$.
Taking into account the initial conditions
%\begin{linenomath}
\begin{equation*}%\label{2035}
\frac{d J_{1}(t)}{dt}\biggl|_{t=0} = -0.5 \beta^{2} b^{2}, \ \ \  J_{1}(t)\bigl|_{t=0}\bigr.=1,
\end{equation*}
%\end{linenomath}
we obtain an equation for determining the constants:
%\begin{linenomath}
\begin{equation*}%\label{2036}
 C_{1}+C_{2} = 1,
\end{equation*}
%\end{linenomath}
%\begin{linenomath}
\begin{equation*}
  \begin{gathered}
    \begin{aligned}
      0.5 \beta^{2} b^{2} & = C_{1}\exp\biggl\{ -0.5 t (\beta^{2} b^{2}+ 4\lambda)+t\sqrt{\frac{1}{4}\beta^{4} b^{4}+4\lambda^{2}} \biggr\} \\
      & ~~~ {} + C_{2}\exp\biggl\{ -0.5 t (\beta^{2} b^{2}+ 4\lambda)-t\sqrt{\frac{1}{4}\beta^{4} b^{4}+4\lambda^{2}} \biggr\}, \\
      0.5 \beta^{2} b^{2} & = (1-C_{2})\exp\biggl\{ -0.5 t (\beta^{2} b^{2}+ 4\lambda)+t\sqrt{\frac{1}{4}\beta^{4} b^{4}+4\lambda^{2}} \biggr\} \\
      & ~~~ {} + C_{2}\exp\biggl\{-0.5 t (\beta^{2} b^{2}+ 4\lambda)-t\sqrt{\frac{1}{4}\beta^{4} b^{4}+4\lambda^{2}} \biggr\},
    \end{aligned} \\
    \end{gathered}
\end{equation*}
%\end{linenomath}
 %\begin{linenomath}
\begin{equation*}
    2\lambda = C_{2}\sqrt{\frac{1}{4}\beta^{4} b^{4}+4\lambda^{2}}.
\end{equation*}
%\end{linenomath}
Having solved  this equations system, we establish that
%\begin{linenomath}
\begin{equation*}%\label{2037}
  \begin{gathered}
    C_{2} = 2\lambda\biggl( \sqrt{\frac{1}{4}\beta^{4} b^{4}+4\lambda^{2}} \biggr)^{-1} = \biggl( \frac{1}{16}\lambda^{-2}\beta^{4} b^{4}+1 \biggr)^{-0.5},\\
    C_{1} = 1 - \biggl( \frac{1}{16}\lambda^{-2}\beta^{4} b^{4}+1 \biggr)^{-0.5}.
  \end{gathered}
\end{equation*}
%\end{linenomath}
Thus, the solution to the equation \eqref{2032} takes the form
%\begin{linenomath}
\begin{equation*}%\label{2040}
  \begin{aligned}
    J_{1}(t) & = \exp\biggl\{ -0.5 t\displaystyle (\beta^{2} b^{2}+ 4\lambda)+t\sqrt{\frac{1}{4}\beta^{4} b^{4}+4\lambda^{2}} \biggr\} \\
    & ~~~ {} + \biggl( \frac{1}{16}\lambda^{-2}\beta^{4} b^{4}+1 \biggr)^{-0.5}\biggl[ \exp\biggl\{ -0.5 t (\beta^{2} b^{2} +4\lambda)-t\sqrt{\frac{1}{4}\beta^{4} b^{4}+4\lambda^{2}} \bigl.\biggr\}\bigr.\bigr. \\
    & ~~~ ~~~ {} - \exp\biggl\{ -0.5 t\displaystyle (\beta^{2} b^{2}+4\lambda)+t\sqrt{\frac{1}{4}\beta^{4} b^{4}+4\lambda^{2}} \biggr\}\bigl.\biggr].
  \end{aligned}
\end{equation*}
%\end{linenomath}
Since the condition is satisfied
%\begin{linenomath}
\begin{equation*}%\label{2042}
\biggl( \frac{1}{16}\lambda^{-2}\beta^{4} b^{4}+1 \biggr)^{-0.5} < 1,
\end{equation*}
%\end{linenomath}
then $J_{1}(t)>0$ for all $t \geqslant 0.$

Using inverse substitutions of coefficients based on relations \eqref{2030}, we obtain the solution to the equation \eqref{2022}. Taking into account the relationship between characteristic functions and moments, we can find random moments for the processes under consideration.

\subsection{Application of a random indicator process to specify implementations of random processes with variable structure}{\label{appl2}}
\begin{Lem}%\label{lem4}
Let the following be given: a collection of independent set-events ${\rm A}_{j}$,\ \ $j=1,2,\ldots,n-1$ and a complete group of incompatible events $ { \rm B}_{j}$,\ \ $j=1,2,\ldots,n$:
$$
B_{1} ={\rm A}_{1}; \ \ \ \ \ \   {\rm B}_{j} ={\rm A}_{j} \bigcap \biggl( \, \bigcap _{k=1}^{j-1} \bar{{\rm A}}_{k} \biggr), \ \ \ j=2,3,\ldots,n-1; \ \ \ \ \ \ {\rm B}_{n} =\bigcap _{k=1}^{n-1} \bar{{\rm A}}_{k} .
$$
Let us assume that a set of probabilities is given:
$$
Prob({\rm B}_{j}), \ \ \ j=1,2,\ldots,n-1;\ \ \ \ \ \ Prob({\rm B}_{n})=1-\sum\limits_{j=1}^{n-1}Prob({\rm B}_{j}).
$$
Then it is possible to establish a one-to-one correspondence between the sets $Prob({\rm A}_{j})$ and $Prob({\rm B}_{r} )$, \ \ $r,j= 1,2, \ldots,n-1$.
\end{Lem}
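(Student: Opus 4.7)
The plan is to work with the numerical probabilities directly. Setting $p_j := Prob(A_j)$ and $q_j := Prob(B_j)$ for $j=1,\ldots,n-1$, I would exhibit an explicit pair of mutually inverse formulas connecting the two tuples. The forward direction is immediate from the independence hypothesis: since $B_j = A_j \cap \bigcap_{k<j}\bar{A}_k$ is an intersection of independent events, we obtain
\[
q_j = p_j \prod_{k=1}^{j-1}(1-p_k), \quad j=1,\ldots,n-1, \qquad q_n = \prod_{k=1}^{n-1}(1-p_k),
\]
which is a polynomial map $(p_1,\ldots,p_{n-1}) \mapsto (q_1,\ldots,q_{n-1})$ (and $q_n$ is then determined).

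The key step is to invert this map by establishing, by induction on $j$, the auxiliary identity $\prod_{k=1}^{j-1}(1-p_k) = 1 - \sum_{k=1}^{j-1} q_k$. The base case $j=1$ is trivial, and the inductive step uses the forward formula for $q_j$ to write $1 - \sum_{k=1}^{j} q_k = \prod_{k<j}(1-p_k) - p_j \prod_{k<j}(1-p_k) = \prod_{k\le j}(1-p_k)$. This identity immediately yields the explicit inverse
\[
p_j = \frac{q_j}{1 - \sum_{k=1}^{j-1} q_k}, \quad j=1,\ldots,n-1,
\]
in which each $p_j$ depends only on the $q_k$ with $k\le j$, so the recursion is well-posed one index at a time.

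The final task is to verify well-definedness, and this is where the only genuine obstacle sits: each denominator $1 - \sum_{k=1}^{j-1} q_k$ in the inverse formula must be nonzero, equivalently $\prod_{k<j}(1-p_k)\neq 0$. This fails precisely in the degenerate case where some $p_k=1$, which forces $q_{k+1}=\cdots=q_n=0$ and leaves $p_{k+1},\ldots,p_{n-1}$ undetermined; there the correspondence cannot be one-to-one. Under the natural nondegeneracy assumption $Prob(B_n)>0$ (equivalently $\sum_{k=1}^{n-1} q_k < 1$), a telescoping argument gives $1 - \sum_{k=1}^{j-1} q_k \geq Prob(B_n) + \sum_{k=j}^{n-1} q_k > 0$ for every $j$, so all denominators are strictly positive. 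The two explicit formulas then constitute mutual inverses between the sets $\{Prob(A_j)\}_{j=1}^{n-1}$ and $\{Prob(B_r)\}_{r=1}^{n-1}$, yielding the claimed bijection.
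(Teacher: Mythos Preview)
Your argument is essentially the same as the paper's: both write the forward map $q_j = p_j\prod_{k<j}(1-p_k)$ from independence and then invert it recursively to obtain $p_j = q_j\big/\bigl(1-\sum_{k<j}q_k\bigr)$, with the paper doing the substitution step by step rather than isolating your auxiliary identity $\prod_{k<j}(1-p_k)=1-\sum_{k<j}q_k$ as an explicit induction. Your treatment is in fact more careful than the paper's, which never addresses the vanishing-denominator issue you flag; your nondegeneracy hypothesis $Prob(B_n)>0$ is the natural fix, and the paper silently assumes it.
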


\begin{proof}
Due to the independence of ${\rm A}_{j},\ \ j=1,2,\ldots,n-1$, we get the equalities:
$$
  \begin{aligned}
    Prob({\rm B}_{1}) & = Prob({\rm A}_{1}), \\
    Prob({\rm B}_{2}) & = Prob({\rm A}_{2}) Prob({\rm \bar{A}}_{1}), \\
    Prob({\rm B}_{3}) & = Prob({\rm A}_{3}) Prob({\rm \bar{A}}_{2})Prob({\rm \bar{A}}_{1}), \\
    & \ldots, \\
    Prob({\rm B}_{n-1}) & = Prob({\rm A}_{n-1})\prod\limits_{k=1}^{n-2}Prob({\rm \bar{A}}_{k})
  \end{aligned}
$$
or
$$
  \begin{aligned}
    Prob({\rm B}_{1}) & = Prob({\rm A}_{1}), \\
    Prob({\rm B}_{2}) & = Prob({\rm A}_{2}) (1-Prob({\rm  A}_{1})), \\
    Prob({\rm B}_{3}) & = Prob({\rm A}_{3})(1- Prob({\rm  A}_{2})) (1-Prob({\rm {A}}_{1})), \\
    & \ldots, \\
    Prob({\rm B}_{n-1}) & = Prob({\rm A}_{n-1}) \prod\limits_{k=1}^{n-2}(1-Prob({\rm {A}}_{k})).
  \end{aligned}
$$

Switching from one equality to another, we obtain
$$
  \begin{aligned}
    Prob({\rm B}_{1}) & = Prob({\rm A}_{1}), \\
    Prob({\rm B}_{2}) & = Prob({\rm A}_{2}) (1-Prob({\rm  A}_{1}))=Prob({\rm A}_{2}) (1-Prob({\rm  B}_{1})) \\
    & ~~~ \Rightarrow \ \ \ Prob({\rm A}_{2}) = \frac{Prob({\rm B}_{2})}{1-Prob({\rm  B}_{1})}, \\
    Prob({\rm B}_{3}) & = Prob({\rm A}_{3})(1- Prob({\rm  A }_{2}))(1-Prob({\rm {A}}_{1})) \\
    & ~~~ \Rightarrow \ \ \ Prob({\rm A}_{3}) = \frac{Prob({\rm B}_{3})}{1-Prob({\rm  B}_{1})-Prob({\rm  B }_{2})}, \\
    & ~~~ \ldots, \\
    Prob({\rm B}_{n-1}) & = Prob({\rm A}_{n-1})\prod\limits_{k=1}^{n-2}(1-Prob({\rm {A}}_{k})) \\
    & ~~~ \Rightarrow \ \ \ Prob({\rm A}_{n-1}) = \frac{Prob({\rm B}_{n-1})}{1-\sum\limits_{k=1}^{n-2}Prob({\rm  B }_{k})}.
  \end{aligned}
$$
Thus, a one-to-one correspondence established.
\end{proof}

Consider an example. Let $\chi_{j}(t)$ be independent indicator random processes. For example, $\chi_{j}(t)$ can be as follows:
%\begin{linenomath}
\begin{equation}\label{009}
\chi_{j}(t)=0.5(1-\cos(\pi N_{j}(t)),
\end{equation}
%\end{linenomath}
where $N_{j}(t)$ are independent Poisson processes with variable intensity $\lambda _{j}(t)$, and
$$
Prob(N_{j}(t)=m) = \frac{a_{j} ^{m}(t)}{m!} e^{-a_{j} (t)}, \ \ \ m=0,1,2,\ldots , \ \ \ a_{j}(t) = \int _{0}^{t}\lambda _{j} (\tau )d\tau.
$$

\noindent
Let the events ${\rm B}_{j}$, $j-1,\ldots, n$, be incompatible events that constitute a complete group. Let us assume that events ${\rm A}_{j}$ are associated with process $\chi_{j}(t)$, and events ${\rm B}_{j}$ are associated with process $N_{j} ( t)$.

Let the events ${\rm B}_{j}$, $j-1,\ldots, n$, be incompatible events that form a complete group. Let us assume that the event process ${\rm A}_{j}$ is associated with the process $\chi_{j}(t)$, and the event process ${\rm B}_{j} $ is associated with the probabilities $Prob_{t } ({\rm B}_{j} )=p_{j}(t)$.

Taking into account \eqref{009}, events ${\rm A}_{j} $ will correspond only to odd values of the process $N_{j}(t)$.
Therefore,
%\begin{linenomath}
\begin{equation*}%\label{7}
  \begin{aligned}
    Prob_{t} (\bar{{\rm A}}_{j}) & = \sum_{k=0}^{\infty}Prob(N_{j}(t)|N_{j}(t)=2k) \\
    & = e^{-a_{j}(t)} \sum\limits_{m=0}^{\infty} \frac{(a_{j}(t))^{2m}} {(2m)!} = e^{-a_{j}(t)} \cosh a_{j}(t) = 0.5  (1+e^{-2a_{j}(t)}).
  \end{aligned}
\end{equation*}
%\end{linenomath}

\noindent
Let us proceed  to comparing the distributions $Prob_{t} ({\rm A}_{j} )$ and $Prob_{t} ({\rm B}_{j} )$:
%\begin{linenomath}
\begin{equation*}%\label{70}
  \begin{gathered}
    0 < Prob_{t} ({\rm A}_{j}) = 0.5 (1-e^{-2a_{j}(t)}) = \frac{Prob_{t} ({\rm B}_{j} )}{1-\sum\limits_{k=1}^{j-1} Prob_{t} ({\rm B}_{k})} = \frac{p_{j}(t)}{1-\sum\limits_{k=1}^{j-1} p_{k} (t) }=p_{j} (\chi (t))<1, \\
   1-2p_{j} (\chi (t)) = e^{-2a_{j}(t)} = 1 - \frac{2p_{j} (t)}{1-\sum\limits_{k=1}^{j-1} p_{k}(t)} = \frac{ -\sum\limits_{k=1}^{j} p_{k}(t) - p_{j} (t)}{1-\sum\limits_{k=1}^{j-1} p_{k}(t)} \geqslant 0.
  \end{gathered}
\end{equation*}
%\end{linenomath}

\noindent
From this equality it follows:
%\begin{linenomath}
\begin{equation}\label{701}
a_{j}(t) = 0.5  \ln \left[\frac{1-\sum\limits_{k=1}^{j-1} p_{k} (t)}{1-\sum\limits_{k=1}^{j} p_{k} (t)]-p_{j} (t)}\right].
\end{equation}
%\end{linenomath}

\noindent
As the numerator in the equality \eqref{701} is positive,
$$
  1 - \sum\limits_{k=1}^{j-1} p_{k}(t) = \sum\limits_{k=j}^{n} p_{k}(t) \geqslant 0,
$$
it is necessary to establish the conditions when the denominator is positive:
%\begin{linenomath}
\begin{equation}\label{702}
  1 - \sum\limits_{k=1}^{j} p_{k}(t) \geqslant p_{j}(t) \ \ \ \Rightarrow  \ \ \
  \sum\limits_{k=j+1}^{n} p_{k}(t) \geqslant p_{j}(t) \ \ \ \forall\,j=1,\ldots,n-1.
\end{equation}
%\end{linenomath}

\noindent
The requirement follows from  \eqref{702}:
%\begin{linenomath}
\begin{equation*}%\label{703}
p_{j+1} (t) \geqslant p_{j} (t).
\end{equation*}
%\end{linenomath}
Such ranking is always possible, and therefore we consider it as the initial one.

Since $a_{j}(t) \geqslant 0$, the following inequality must hold:
$$
  1 - \sum\limits_{k=1}^{j-1} p_{k} (t) \geqslant 1-\sum\limits_{k=1}^{j} p_{k}(t)-p_{j}(t),
$$
which is always held:
$$
  \biggl[ 1-\displaystyle\sum\limits_{k=1}^{j} p_{k}(t) \biggr] + p_{j}(t) \geqslant \biggl[ 1-\sum\limits_{k=1}^{j} p_{k}(t) \biggr] - p_{j}(t) \ \ \ \ \Rightarrow \ \ \ \ 0 \geqslant -2p_{j}(t).
$$

%%%%%%%%%%%%%%%%%%%%%%%%%%%%%%%%%%%%%%%%%%
\section*{Conclusion}

The proposed method of indicator random processes using the method of characteristic functions allows us to consider both previously known models and new ones that have physical interpretations, such as diffusion processes.
Theorems have been proven in which equations for the characteristic functions of the random processes under consideration are obtained. In the future, these characteristic functions can be used to find the probabilistic properties of random processes.

Also we note that if a complete group of incompatible random processes is given, then it is established that there is a set of independent indicator random processes. Based on the latter, it is possible to construct a complete group of events whose distribution will coincide with the distribution of a given group of incompatible events.

\end{document}